\theoremstyle{plain}
\newtheorem{theorem}{Theorem}[section]
\newtheorem{proposition}[theorem]{Proposition}
\newtheorem{lemma}[theorem]{Lemma}
\newtheorem{corollary}[theorem]{Corollary}
\theoremstyle{remark}
\newtheorem{remark}{Remark}
\newcommand{\cl}[1]{\ensuremath{\mathcal{#1}}}
\newcommand{\minor}[3]{\ensuremath{{#1}_{{#2} \leftarrow {#3}}}}
\newcommand{\vect}[1]{\ensuremath{\mathbf{#1}}} % vector
\begin{document}

\hyphenation{Bool-ean}

\title{Self-commuting lattice polynomial functions}
\date{\today}
\author{Miguel Couceiro}
\address[M. Couceiro]{University of Luxembourg \\
Mathematics Research Unit \\
6, rue Richard Coudenhove-Kalergi \\
L--1359 Luxembourg \\
Luxembourg}
\email{miguel.couceiro@uni.lu}
\author{Erkko Lehtonen}
\address[E. Lehtonen]{University of Luxembourg \\
Computer Science and Communications Research Unit \\
6, rue Richard Coudenhove-Kalergi \\
L--1359 Luxembourg \\
Luxembourg}
\email{erkko.lehtonen@uni.lu}

\begin{abstract}
We provide sufficient conditions for a lattice polynomial function to be self-commuting. We explicitly describe self-commuting polynomial functions over chains.
\end{abstract}
% keywords: self-commutation, bisymmetry, lattice polynomial functions
% AMS subject classification: 39B05, 08A40

\maketitle

\section{Introduction}

Two operations $f \colon A^n \to A$ and $g \colon A^m \to A$ are said to commute, if for all $a_{ij}\in A$ ($1 \leq i \leq n$, $1 \leq j \leq m$), the following identity holds
\begin{multline*}
f \bigl( g(a_{11}, a_{12}, \dotsc, a_{1m}), g(a_{21}, a_{22}, \dotsc, a_{2m}), \dotsc, g(a_{n1}, a_{n2}, \dotsc, a_{nm}) \bigr) \\
= g \bigl( f(a_{11}, a_{21}, \dotsc, a_{n1}), f(a_{12}, a_{22}, \dotsc, a_{n2}), \dotsc, f(a_{1m}, a_{2m}, \dotsc, a_{nm}) \bigr).
\end{multline*}

For $n = m = 2$, the above condition stipulates that
\[
f \bigl( g(a_{11}, a_{12}), g(a_{21}, a_{22}) \bigr) = g \bigl( f(a_{11}, a_{21}), f(a_{12}, a_{22}) \bigr).
\]
The Eckmann-Hilton theorem~\cite{EH} asserts that if both $f$ and $g$ have an identity element and $f \perp g$, then in fact $f = g$ and $(A; f)$ is a commutative monoid on $A$.

The relevance of the notion of commutation is made apparent in works of several authors. In particular, commutation is the defining property of entropic algebras~\cite{PS,RS,Stronkowski} (an algebra is entropic if its operations commute pairwise; idempotent entropic algebras are called modes) and centralizer clones~\cite{Larose,MR,ST,TS} (the centralizer of a set $F$ of operations is the set of all operations that commute with every operation in $F$; the centralizer of $F$ is a clone).

We are interested in functions $f$ that commute with themselves. An algebra $(A; f)$ where $f$ is a binary operation that satisfies the identity
\[
f \bigl( f(a_{11}, a_{12}), f(a_{21}, a_{22}) \bigr) = f \bigl( f(a_{11}, a_{21}), f(a_{12}, a_{22}) \bigr)
\]
is called a medial groupoid~\cite{JKAU,JKRoz}. Hence, self-com\-mu\-ta\-tion generalizes the notion of mediality (see, e.g., \cite{GMMP}), and it has been investigated by several authors (see, e.g., \cite{Aczel,AD,MMT,Soublin}). In the realm of aggregation theory, self-commutation is also known as bisymmetry; for motivations and general background, see \cite{GMMP}.

In this paper, we address the question of characterizing classes of self-commuting operations. In Section~\ref{sec:preliminaries}, we recall basic notions in the universal-algebraic setting and settle the terminology used throughout the paper. Moreover,
by showing that self-commutation is preserved under several operations (e.g., permutation of variables, identification of variables and addition of dummy variables), we develop general tools for tackling the question of describing self-commuting operations.

This question is partially answered for lattice polynomial functions (in particular, for the so-called discrete Sugeno integrals, i.e., idempotent polynomial functions; see, e.g., \cite{CouMar,GMMP}) in Section~\ref{sec:SCLPF}. We start by surveying well-known results concerning normal form representations of these lattice functions which we then use to specify those polynomial functions on bounded chains which are self-commuting.
This explicit description is obtained by providing sufficient conditions for a lattice polynomial function to be self-commuting, and by showing that these conditions are also necessary in the particular case of polynomial functions over bounded chains.

In Section~\ref{sec:Concluding} we point out problems which are left unsettled, and motivate directions of future research.

%%%%%%%%%%%%%%%%%%%%%%%%%%%%%%

\section{Preliminaries}
\label{sec:preliminaries}

In this section, we introduce some notions and terminology as well as establish some preliminary results that will be used in the sequel. For an integer $n \geq 1$, set $[n] := \{1, 2, \dotsc, n\}$. With no danger of ambiguity, we denote the tuple $(x_1, \dotsc, x_n)$ of any length by $\mathbf{x}$.

%%%%%

\subsection{Operations and algebras}

Let $A$ be an arbitrary nonempty set. An \emph{operation} on $A$ is a map $f \colon A^n \to A$ for some integer $n \geq 1$, called the \emph{arity} of $f$. We denote by $\cl{O}_A^{(n)}$ the set of all $n$-ary operations on $A$, and we denote by $\cl{O}_A$ the set of all finitary operations on $A$, i.e., $\cl{O}_A := \bigcup_{n \geq 1} \cl{O}_A^{(n)}$.

We assume that the reader is familiar with basic notions of universal algebra and lattice theory. In particular, the concepts of \emph{term operation} and \emph{polynomial operation} will not be defined in the current paper, and we refer the reader to~\cite{Birkhoff,BS,DP,DW,DW2009,Gratzer,Rud01} for general background on universal algebra and lattice theory.

%%%%%

\subsection{Simple minors}

Let $f \in \cl{O}_A^{(n)}$, $g \in \cl{O}_A^{(m)}$. We say that $f$ is obtained from $g$ by \emph{simple variable substitution,} or $f$ is a \emph{simple minor} of $g$, if there is a mapping $\sigma \colon [m] \to [n]$ such that
\[
f(x_1, \dotsc, x_n) = g(x_{\sigma(1)}, x_{\sigma(2)}, \dotsc, x_{\sigma(m)}).
\]
If $\sigma$ is not injective, then we speak of \emph{identification of variables.} If $\sigma$ is not surjective, then we speak of \emph{addition of inessential variables.} If $\sigma$ is bijective, then we speak of \emph{permutation of variables.} For distinct indices $i, j \in [n]$, the function $\minor{f}{i}{j} \colon A^n \to A$ obtained from $f$ by the simple variable substitution
\[
\minor{f}{i}{j}(x_1, \dotsc, x_n) := f(x_1, \dotsc, x_{i-1}, x_j, x_{i+1}, \dotsc, x_n)
\]
is called a \emph{variable identification minor} of $f$, obtained by identifying $x_i$ with $x_j$.

For studies of classes of operations that are closed under taking simple minors, see, e.g.,~\cite{CF2005,Pippenger}.

%%%%%

\subsection{Self-commutation}

Let $f \colon A^n \to A$ and $g \colon A^m \to A$ be operations on $A$. We say that $f$ \emph{commutes} with $g$, denoted $f \perp g$, if for all $a_{ij}$ ($i \in [n]$, $j \in [m]$), it holds that
\begin{multline*}
f \bigl( g(a_{11}, a_{12}, \dotsc, a_{1m}), g(a_{21}, a_{22}, \dotsc, a_{2m}), \dotsc, g(a_{n1}, a_{n2}, \dotsc, a_{nm}) \bigr) \\
= g \bigl( f(a_{11}, a_{21}, \dotsc, a_{n1}), f(a_{12}, a_{22}, \dotsc, a_{n2}), \dotsc, f(a_{1m}, a_{2m}, \dotsc, a_{nm}) \bigr).
\end{multline*}
If $f \perp f$, then we say that $f$ is \emph{self-commuting.}

\begin{lemma}
\label{lem:scminors}
Let $f \in \cl{O}_A^{(n)}$, $g \in \cl{O}_A^{(m)}$, and let $\sigma \colon [n] \to [\nu]$ and $\tau \colon [m] \to [\mu]$ be arbitrary mappings. Let $f_\sigma \in \cl{O}_A^{(\nu)}$ and $g_\tau \in \cl{O}_A^{(\mu)}$ be the operations defined by
\begin{align*}
f_\sigma(x_1, \dots, x_{\nu}) &= f(x_{\sigma(1)}, \dots, x_{\sigma(n)}), \\
g_\tau(x_1, \dots, x_{\mu}) &= g(x_{\tau(1)}, \dots, x_{\tau(m)}).
\end{align*}
If $f \perp g$, then $f_\sigma \perp g_\tau$.
\end{lemma}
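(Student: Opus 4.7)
The plan is to unfold both sides of the commutation identity for $f_\sigma$ and $g_\tau$ and observe that they reduce to an instance of the commutation identity for $f$ and $g$ applied to an appropriately re-indexed matrix of elements.

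Concretely, I would start by fixing an arbitrary $\nu \times \mu$ array $(b_{ij})_{i \in [\nu], j \in [\mu]}$ of elements of $A$ and writing out what $f_\sigma \perp g_\tau$ requires. Expanding $g_\tau(b_{i1}, \dots, b_{i\mu}) = g(b_{i\tau(1)}, \dots, b_{i\tau(m)})$ inside $f_\sigma$ and then unfolding $f_\sigma$ itself via $\sigma$, the left-hand side becomes
\[
f\bigl( g(b_{\sigma(1),\tau(1)}, \dots, b_{\sigma(1),\tau(m)}), \dots, g(b_{\sigma(n),\tau(1)}, \dots, b_{\sigma(n),\tau(m)}) \bigr).
\]
Performing the analogous expansion of the right-hand side yields
\[
g\bigl( f(b_{\sigma(1),\tau(1)}, \dots, b_{\sigma(n),\tau(1)}), \dots, f(b_{\sigma(1),\tau(m)}, \dots, b_{\sigma(n),\tau(m)}) \bigr).
\]

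Next, I would introduce the $n \times m$ array $(a_{kl})$ defined by $a_{kl} := b_{\sigma(k), \tau(l)}$ for $k \in [n]$ and $l \in [m]$. Substituting this into the two displays above, the desired equality becomes precisely the commutation identity $f \perp g$ evaluated at $(a_{kl})$, which holds by hypothesis. Since $(b_{ij})$ was arbitrary, $f_\sigma \perp g_\tau$ follows.

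There is no real obstacle here; the only thing to be careful about is bookkeeping of indices, in particular making sure that the roles of $\sigma$ (acting on row indices through the outer applications of $f$ and $f_\sigma$) and $\tau$ (acting on column indices through $g$ and $g_\tau$) are kept straight, so that the composite substitution $(k,l) \mapsto (\sigma(k), \tau(l))$ indeed converts the unfolded instance for $f_\sigma$ and $g_\tau$ into an instance for $f$ and $g$.
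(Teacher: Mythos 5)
Your proposal is correct and follows essentially the same route as the paper's proof: unfold both sides via $\sigma$ and $\tau$, recognize the result as the commutation identity for $f$ and $g$ evaluated at the re-indexed array $a_{kl} = b_{\sigma(k),\tau(l)}$, and apply the hypothesis $f \perp g$. Your explicit introduction of the substituted array is, if anything, a slightly cleaner piece of bookkeeping than the paper's in-line index manipulation.
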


\begin{proof}
By the definition of $f_\sigma$ and $g_\tau$,
\begin{align*}
& f_\sigma \bigl( g_\tau(a_{11}, a_{12}, \dotsc, a_{1m}), g_\tau(a_{21}, a_{22}, \dotsc, a_{2m}), \dotsc, g_\tau(a_{n1}, a_{n2}, \dotsc, a_{nm}) \bigr) = \\
& f \bigl( g(a_{\sigma(1) \tau(1)}, a_{\sigma(1) \tau(2)}, \dotsc, a_{\sigma(1) \tau(m)}), g(a_{\sigma(2) \tau(1)}, a_{\sigma(2) \tau(2)}, \dotsc, a_{\sigma(2) \tau(m)}), \dotsc, \\ & \qquad\qquad\qquad\qquad\qquad\qquad\qquad\qquad\quad g(a_{\sigma(n) \tau(1)}, a_{\sigma(n) \tau(2)}, \dotsc, a_{\sigma(n) \tau(m)}) \bigr) = \\
& g \bigl( f(a_{\sigma(1) \tau(1)}, a_{\sigma(2) \tau(1)}, \dotsc, a_{\sigma(m) \tau(1)}), f(a_{\sigma(1) \tau(2)}, a_{\sigma(2) \tau(2)}, \dotsc, a_{\sigma(m) \tau(2)}), \dotsc, \\ & \qquad\qquad\qquad\qquad\qquad\qquad\qquad\qquad\quad f(a_{\sigma(1) \tau(n)}, a_{\sigma(2) \tau(n)}, \dotsc, a_{\sigma(n) \tau(m)}) \bigr) = \\
& g_\tau \bigl( f_\sigma(a_{11}, a_{21}, \dotsc, a_{m1}), f_\sigma(a_{12}, a_{22}, \dotsc, a_{m2}), \dotsc, f_\sigma(a_{1n}, a_{2n}, \dotsc, a_{nm}) \bigr),
\end{align*}
where the second equality holds by the assumption that $f \perp g$.
\end{proof}

\begin{corollary}
\label{cor:smsc}
If $f \in \cl{O}_A$ is self-commuting, then every simple minor of $f$ is self-commuting.
\end{corollary}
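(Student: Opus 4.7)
The plan is to deduce the corollary as an immediate specialization of Lemma~\ref{lem:scminors}. Given a self-commuting operation $f \in \cl{O}_A^{(n)}$, so that $f \perp f$ holds, I would take an arbitrary simple minor $f'$ of $f$. By the definition given in the subsection on simple minors, $f'$ has the form $f'(x_1, \dotsc, x_\nu) = f(x_{\sigma(1)}, \dotsc, x_{\sigma(n)})$ for some map $\sigma \colon [n] \to [\nu]$; this coincides precisely with the operation $f_\sigma$ introduced in the statement of Lemma~\ref{lem:scminors}.

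I would then invoke Lemma~\ref{lem:scminors} with $g = f$ and $\tau = \sigma$. The hypothesis $f \perp g$ of that lemma reduces to $f \perp f$, which holds by assumption, and its conclusion $f_\sigma \perp g_\tau$ becomes $f_\sigma \perp f_\sigma$, i.e., $f'$ is self-commuting. This is exactly what the corollary claims.

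I anticipate no real obstacle, since the corollary is nothing more than the diagonal case $g = f$, $\tau = \sigma$ of the lemma. The only care needed is to verify that the two notational conventions align: the notion of simple minor as introduced earlier (a map substituting variables into the argument list of $f$) and the operation $f_\sigma$ as used in the lemma describe the very same construction. Once this identification is in place, the proof is a one-line quotation of the lemma.
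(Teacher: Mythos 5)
Your proof is correct and matches the paper's intent exactly: the corollary is stated without proof precisely because it is the diagonal specialization $g = f$, $\tau = \sigma$ of Lemma~\ref{lem:scminors}, which is what you invoke. Your check that the simple-minor construction coincides with the operation $f_\sigma$ of the lemma is the only point requiring care, and you handle it correctly.
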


In the particular case when $A$ is finite, Corollary~\ref{cor:smsc} translates into saying that the class of self-commuting operations on $A$ is definable by functional equations (see \cite{CF2007}).

The set of self-commuting operations is also closed under special type of substitutions of constants for variables, as described by the following lemma.
Let $f \colon A^n \to A$ and $c \in A$. For $i \in [n]$, we define $f_c^i \colon A^{n-1} \to A$ to be the operation
\[
f_c^i(a_1, \dots, a_{n-1}) = f(a_1, \dots, a_{i-1}, c, a_i, \dots, a_{n-1}).
\]

\begin{lemma}
Assume that $f \colon A^n \to A$ preserves $c \in A$, i.e., $f(c, \dotsc, c) = c$. If $f$ is self-commuting, then for every $i \in [n]$, $f_c^i$ is self-commuting.
\end{lemma}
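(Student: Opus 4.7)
The plan is to reduce self-commutation of $f_c^i$ to self-commutation of $f$ by padding the $(n-1) \times (n-1)$ matrix of arguments into an $n \times n$ matrix whose $i$-th row and $i$-th column consist entirely of $c$'s, exploiting the assumption $f(c, \dots, c) = c$.

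More precisely, given entries $a_{kl}$ for $k, l \in [n-1]$, I would define an $n \times n$ array $(B_{kl})$ by inserting $c$'s in the $i$-th row and $i$-th column: set $B_{i,l} = B_{k,i} = c$ for all $k,l$, and fill the four remaining blocks with the appropriate $a_{k'l'}$ obtained by shifting indices that exceed $i$ by one. The first step is to write out the left-hand side of the self-commutation identity for $f_c^i$, namely
\[
f_c^i\bigl(f_c^i(a_{11}, \dots, a_{1,n-1}), \dots, f_c^i(a_{n-1,1}, \dots, a_{n-1,n-1})\bigr),
\]
and expand each $f_c^i$ as $f$ with a $c$ inserted in position $i$. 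Using $f(c, \dots, c) = c$, the $c$ that appears in the outer position $i$ can be rewritten as $f(c, \dots, c) = f(B_{i,1}, \dots, B_{i,n})$, and the inner $c$'s in position $i$ of the other inner $f$'s are exactly the entries $B_{k,i}$. This identifies the LHS with
\[
f\bigl(f(B_{1,1}, \dots, B_{1,n}), \dots, f(B_{n,1}, \dots, B_{n,n})\bigr).
\]

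The second step is to apply self-commutation of $f$ to the $n \times n$ array $(B_{kl})$, obtaining
\[
f\bigl(f(B_{1,1}, \dots, B_{n,1}), \dots, f(B_{1,n}, \dots, B_{n,n})\bigr).
\]
The third, symmetric step is to perform the analogous expansion starting from the right-hand side of the self-commutation identity for $f_c^i$; the column-wise version of the bookkeeping (with $c$'s in column $i$ of $B$, and $f(c, \dots, c) = c$ filling the outer position $i$) shows that this same expression equals the RHS, completing the proof.

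The only mildly delicate point is keeping the index shifts consistent: indices $k < i$ in the $(n-1) \times (n-1)$ array of $a$'s land in row $k$ of $B$, while indices $k \geq i$ must be shifted to row $k+1$, and similarly for columns. Once the matrix $B$ is set up correctly, the rest is a direct application of $f \perp f$ together with the idempotency of $f$ at $c$. I do not expect any genuine obstacle beyond checking the indices.
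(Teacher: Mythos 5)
Your proof is correct and follows essentially the same route as the paper's: pad the $(n-1)\times(n-1)$ argument array to an $n\times n$ array with $c$'s, use $f(c,\dotsc,c)=c$ to absorb the inserted constants into inner/outer applications of $f$, and apply $f\perp f$ to the padded array. The only cosmetic difference is that the paper treats $i=1$ explicitly (avoiding the index shifts) and handles general $i$ by invoking the permutation-of-variables lemma, whereas you carry out the bookkeeping for arbitrary $i$ directly.
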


\begin{proof}
We will show that the claim holds for $i = 1$. It the follows from Lemma~\ref{lem:scminors}, by considering suitable permutations of variables, that the claim holds for all $i \in [n]$.
By the definition of $f_c^1$ and by the assumption that $f(c, \dots, c) = c$, we have
\begin{multline*}
f_c^1 \bigl( f_c^1(a_{11}, \dotsc, a_{1,n-1}), \dotsc, f_c^1(a_{n-1,1}, \dotsc, a_{n-1,n-1}) \bigr) = \\
f \bigl( f(c, c, \dotsc, c), f(c, a_{11}, \dotsc, a_{1,n-1}), \dotsc, f(c, a_{n-1,1}, \dotsc, a_{n-1,n-1}) \bigr) = \\
f \bigl( f(c, c, \dotsc, c), f(c, a_{11}, \dotsc, a_{n-1,1}), \dotsc, f(c, a_{1,n-1}, \dotsc, a_{n-1,n-1}) \bigr) = \\
f_c^1 \bigl( f_c^1(a_{11}, \dotsc, a_{n-1,1}), \dotsc, f_c^1(a_{1,n-1}, \dotsc, a_{n-1,n-1}) \bigr),
\end{multline*}
where the second equality holds by the assumption that $f$ is self-commuting.
\end{proof}

%%%%%%%%%%%%%%%%%%%%%%%%%%%%%%

\section{Self-commuting lattice polynomial functions}
\label{sec:SCLPF}

Let $(L; \wedge, \vee)$ be a lattice. With no danger of ambiguity, we denote lattices by their universes. In this section we study the self-commutation property on  lattice polynomial functions, i.e., mappings $f\colon L^n\to L$ which can be obtained as compositions of the lattice operations and applied to variables (projections) and constants. 
As shown by Goodstein \cite{Goodstein}, lattice polynomial functions have neat normal form representations in the case when $L$ is a bounded distributive lattice. Thus, in what follows we assume that $L$ is a bounded distributive lattice with least and greatest elements $0$ and $1$, respectively.

We recall the necessary representation results concerning the representation of lattice polynomials as well as introduce some related concepts and terminology in Subsection~\ref{representation}. Then, we consider the property of self-commutation on these functions. We start by providing sufficient conditions for a lattice polynomial function to be self-commuting, which we then use to obtain explicit descriptions of those polynomial functions on chains which satisfy this self-commutation property.

%%%%%
  
\subsection{Preliminary results: representations of lattice polynomials}
\label{representation}

An $n$-\emph{ary} (\emph{lattice}) \emph{polynomial function} from $L^n$ to $L$ is defined recursively as follows:
\begin{enumerate}
\item[(i)] For each $i\in [n]$ and each $c\in L$, the projection $\vect{x}\mapsto x_i$ and the constant function $\vect{x}\mapsto c$ are
polynomial functions from $L^n$ to $L$.

\item[(ii)] If $f$ and $g$ are polynomial functions from $L^n$ to $L$, then $f\vee g$ and $f\wedge g$ are polynomial functions from $L^n$ to
$L$.

\item[(iii)] Any  polynomial function from $L^n$ to $L$ is obtained by finitely many applications of the rules (i) and (ii).
\end{enumerate}

If rule (i) is only applied for projections, then the resulting polynomial functions are called (\emph{lattice}) \emph{term functions}~\cite{BS,Gratzer,DW}. Idempotent polynomial functions are also referred to as (\emph{discrete}) \emph{Sugeno integrals}~\cite{CouMar,GMMP}. In the case of bounded distributive lattices, Goodstein \cite{Goodstein} showed that polynomial functions are exactly those which allow
representations in disjunctive normal form (see Proposition~\ref{DNF} below, first appearing in \cite[Lemma 2.2]{Goodstein}; see
also Rudeanu~\cite[Chapter~3,\,\S{3}]{Rud01} for a later reference).

\begin{proposition}\label{DNF}
Let $L$ be a bounded distributive lattice. A function $f \colon L^n \to L$ is a polynomial function if and only if there exist $a_I\in L$, $I\subseteq [n],$ such that, for every $\vect{x}\in L^n$,
$$
f(\vect{x})=\bigvee_{I\subseteq [n]}(a_I\wedge \bigwedge_{i\in I} x_i).
$$
\end{proposition}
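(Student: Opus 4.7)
The plan is to prove the two implications separately. The harder direction is to show that every polynomial function admits such a disjunctive normal form representation; the reverse direction is immediate from the recursive definition of polynomial functions, since each expression of the form $a_I \wedge \bigwedge_{i \in I} x_i$ is built from constants, projections and meets, and the outer join combines them.

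For the forward direction, I would proceed by structural induction on the construction of $f$ as a polynomial function. For the base cases, I observe that the projection $x_j$ is the DNF expression obtained by setting $a_{\{j\}} = 1$ and $a_I = 0$ for $I \neq \{j\}$, while the constant function $c$ is represented by setting $a_\emptyset = c$ and $a_I = 0$ for $I \neq \emptyset$ (using the convention $\bigwedge_{i \in \emptyset} x_i = 1$, valid because $L$ is bounded). For the inductive step, assume $f = \bigvee_{I \subseteq [n]} (a_I \wedge \bigwedge_{i \in I} x_i)$ and $g = \bigvee_{I \subseteq [n]} (b_I \wedge \bigwedge_{i \in I} x_i)$ are given in DNF. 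Then
\[
f \vee g = \bigvee_{I \subseteq [n]} \bigl( (a_I \vee b_I) \wedge \bigwedge_{i \in I} x_i \bigr),
\]
which is again in DNF.

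The main technical step is handling the meet. Using distributivity of $L$, I would expand
\[
f \wedge g = \bigvee_{I, J \subseteq [n]} \bigl( a_I \wedge b_J \wedge \bigwedge_{i \in I \cup J} x_i \bigr),
\]
and then collect terms by grouping the pairs $(I,J)$ that share the same union $K = I \cup J$. Setting $c_K := \bigvee_{I \cup J = K} (a_I \wedge b_J)$, I obtain
\[
f \wedge g = \bigvee_{K \subseteq [n]} \bigl( c_K \wedge \bigwedge_{i \in K} x_i \bigr),
\]
which is the desired DNF form.

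The hard part, in principle, is the meet step, because it requires distributivity in an essential way; without it, the product of two DNF expressions would not collapse back into a DNF expression. Once distributivity is invoked, however, the collection of terms according to the union $I \cup J$ is routine, and the induction goes through. This yields existence of the coefficients $a_I$ for every polynomial function $f$.
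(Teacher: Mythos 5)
Your proof is correct. Note that the paper does not actually supply a proof of Proposition~\ref{DNF}: it is quoted from Goodstein (Lemma~2.2 of the cited reference, see also Rudeanu), so there is no in-paper argument to compare against. Your structural induction is the standard route and every step checks out: the base cases use boundedness of $L$ (you need $0$ and $1$ to write projections and constants in DNF), the join step uses the distributive identity $(a_I \wedge m_I) \vee (b_I \wedge m_I) = (a_I \vee b_I) \wedge m_I$ with $m_I = \bigwedge_{i \in I} x_i$, and the meet step uses distributivity to expand the product, the identity $m_I \wedge m_J = m_{I \cup J}$, and distributivity once more to regroup the coefficients as $c_K = \bigvee_{I \cup J = K} (a_I \wedge b_J)$. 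You correctly isolate the meet step as the one place where distributivity is essential, and the converse direction is indeed immediate from the recursive definition. The only remark worth adding is that your argument produces \emph{some} system of coefficients, whereas Proposition~\ref{prop:DNF(f)} later fixes the canonical choice $a_I = f(\vect{e}_I)$; but the statement as given only asks for existence, so your proof is complete.
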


The expression given in Proposition~\ref{DNF} is usually referred to as the \emph{disjunctive normal form} (DNF)
representation of the polynomial function $f$. In order to simplify notation, if $I$ is a singleton or a two-element set, then we write $a_i$ and $a_{ij}$ for $a_{\{i\}}$ and $a_{\{i,j\}}$, respectively.

The following corollaries belong to the folklore of lattice theory and are immediate consequences of Theorems D and E in \cite{Goodstein}.

\begin{corollary}\label{Ext1}
Every polynomial function is completely determined by its restriction to $\{0,1\}^n$.
\end{corollary}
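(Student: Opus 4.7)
The plan is to deduce the corollary directly from the disjunctive normal form representation given by Proposition~\ref{DNF}. Suppose $f, g \colon L^n \to L$ are polynomial functions that agree on $\{0,1\}^n$; we want to conclude $f = g$. Writing the DNF of $f$ as
$$f(\vect{x}) = \bigvee_{I \subseteq [n]} \Bigl( a_I \wedge \bigwedge_{i \in I} x_i \Bigr),$$
the key observation is that we can exhibit an explicit formula recovering $f$ from its $\{0,1\}^n$-restriction alone, so that $f$ and $g$ must coincide everywhere.

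First, for each $J \subseteq [n]$, denote by $\vect{e}_J \in \{0,1\}^n$ the characteristic tuple of $J$ (with $i$-th coordinate equal to $1$ if $i \in J$ and $0$ otherwise). Substituting $\vect{e}_J$ into the DNF representation, every conjunction $\bigwedge_{i \in I} x_i$ with $I \not\subseteq J$ collapses to $0$, while those with $I \subseteq J$ evaluate to $1$; hence
$$f(\vect{e}_J) = \bigvee_{I \subseteq J} a_I.$$
This shows that the values of $f$ on $\{0,1\}^n$ depend only on the coefficients $a_I$ (a fact we need only as a stepping stone, since the $a_I$ are generally not uniquely determined).

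The main step is to verify the identity
$$f(\vect{x}) = \bigvee_{J \subseteq [n]} \Bigl( f(\vect{e}_J) \wedge \bigwedge_{j \in J} x_j \Bigr)$$
for every $\vect{x} \in L^n$. Substituting the expression for $f(\vect{e}_J)$ computed above, the right-hand side expands (using distributivity) to
$$\bigvee_{J \subseteq [n]} \bigvee_{I \subseteq J} \Bigl( a_I \wedge \bigwedge_{j \in J} x_j \Bigr).$$
For fixed $I$, each inner term with $J \supseteq I$ is bounded above by $a_I \wedge \bigwedge_{i \in I} x_i$ (since enlarging $J$ only adds conjuncts), and this upper bound is itself attained at $J = I$. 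Taking the join over all $I$ therefore yields back $\bigvee_{I \subseteq [n]} (a_I \wedge \bigwedge_{i \in I} x_i) = f(\vect{x})$, establishing the displayed identity.

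The displayed formula expresses $f(\vect{x})$ for arbitrary $\vect{x} \in L^n$ solely in terms of the values $f(\vect{e}_J)$, $J \subseteq [n]$, i.e., in terms of the restriction $f|_{\{0,1\}^n}$. Applying the same identity to $g$ and using the hypothesis $f(\vect{e}_J) = g(\vect{e}_J)$ for every $J$, we conclude $f(\vect{x}) = g(\vect{x})$ for all $\vect{x} \in L^n$. The only potential subtlety is the non-uniqueness of the DNF coefficients, but this is bypassed by working with the values $f(\vect{e}_J)$ directly rather than trying to recover the $a_I$, so no genuine obstacle arises.
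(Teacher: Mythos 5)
Your proof is correct and follows the route the paper has in mind: the corollary is immediate from the DNF representation of Proposition~\ref{DNF}, and the identity you verify along the way is precisely Proposition~\ref{prop:DNF(f)}, which the paper attributes to Goodstein and states shortly afterwards. The paper gives no explicit proof (it only cites Goodstein's Theorems D and E), so your self-contained derivation --- including the remark that the non-uniqueness of the coefficients $a_I$ is bypassed by working with the values $f(\vect{e}_J)$ directly --- is an accurate reconstruction of the intended argument.
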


\begin{corollary}\label{Ext2}
A function $g\colon \{0,1\}^n\rightarrow L$ can be extended to a polynomial function $f\colon L^{n}\rightarrow L$ if and only if it is
nondecreasing. In this case, the extension is unique.
\end{corollary}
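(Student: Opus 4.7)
The plan is to prove the two implications separately, using Proposition~\ref{DNF} for the sufficiency and the monotonicity of lattice operations for the necessity, with uniqueness following immediately from Corollary~\ref{Ext1}.

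For the \emph{only if} direction, suppose $g$ is the restriction to $\{0,1\}^n$ of some polynomial function $f \colon L^n \to L$. Since $f$ is built inductively from projections and constants via $\wedge$ and $\vee$, and all of these are nondecreasing in each argument, an easy induction on the construction of $f$ shows that $f$ itself is nondecreasing. Restricting a nondecreasing function to $\{0,1\}^n$ yields a nondecreasing function, so $g$ is nondecreasing.

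For the \emph{if} direction, assume $g \colon \{0,1\}^n \to L$ is nondecreasing. For each $J \subseteq [n]$, let $\vect{e}_J \in \{0,1\}^n$ denote the characteristic vector of $J$. Motivated by the shape of the DNF in Proposition~\ref{DNF}, define
\[
f(\vect{x}) = \bigvee_{I \subseteq [n]} \Bigl( g(\vect{e}_I) \wedge \bigwedge_{i \in I} x_i \Bigr),
\]
which is a polynomial function by Proposition~\ref{DNF} (with coefficients $a_I = g(\vect{e}_I)$). It remains to check that $f(\vect{e}_J) = g(\vect{e}_J)$ for every $J \subseteq [n]$. Observe that $\bigwedge_{i \in I} (\vect{e}_J)_i$ equals $1$ when $I \subseteq J$ and equals $0$ otherwise, so
\[
f(\vect{e}_J) = \bigvee_{I \subseteq J} g(\vect{e}_I).
\]
Since $g$ is nondecreasing, every term $g(\vect{e}_I)$ with $I \subseteq J$ is bounded above by $g(\vect{e}_J)$, which itself appears in the join (taking $I = J$). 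Hence $f(\vect{e}_J) = g(\vect{e}_J)$, as desired.

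Finally, uniqueness of the extension is immediate from Corollary~\ref{Ext1}: any two polynomial extensions of $g$ agree on $\{0,1\}^n$ and hence agree everywhere on $L^n$. The only nontrivial step is the verification $f(\vect{e}_J) = g(\vect{e}_J)$, and this is precisely where the monotonicity hypothesis on $g$ is used—without it, the join $\bigvee_{I \subseteq J} g(\vect{e}_I)$ need not reduce to $g(\vect{e}_J)$, so the DNF construction would fail to recover $g$.
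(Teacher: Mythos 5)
Your proof is correct. The paper offers no proof of its own for this corollary (it is cited as an immediate consequence of Goodstein's Theorems D and E), but your argument is exactly the standard one the paper's machinery is set up for: monotonicity of polynomial functions by induction on their construction for necessity, the DNF of Proposition~\ref{DNF} with coefficients $a_I = g(\vect{e}_I)$ together with the computation $f(\vect{e}_J) = \bigvee_{I \subseteq J} g(\vect{e}_I) = g(\vect{e}_J)$ for sufficiency, and Corollary~\ref{Ext1} for uniqueness.
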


It is easy to see that the DNF representations of a polynomial function $f\colon L^{n}\rightarrow L$ are not necessarily unique. For instance, 
in Proposition~\ref{DNF}, if for some $I\subseteq [n]$ we have $a_I=\bigvee_{J\subsetneq I}a_J$, then for every $\vect{x}\in L^n$,
\[
f(\vect{x})=\bigvee_{I \neq J \subseteq [n]}(a_J\wedge \bigwedge_{i\in J} x_i).
\]
We refer to the term $a_I \bigwedge_{i \in I} x_i$ as the \emph{$I$-th term} of $f$, and we say that $\lvert I \rvert$ is its \emph{size.} We say that the $I$-th term $a_I \bigwedge_{i \in I} x_i$ is \emph{essential} if $a_I > \bigvee_{J \subsetneq I} a_J$; otherwise, we say that it is \emph{inessential}.
(For a discussion on the uniqueness of DNF representations of lattice polynomial functions see \cite{CouMar}.)

However, using Corollaries \ref{Ext1} and \ref{Ext2}, one can easily set canonical ways of constructing these normal form representations of polynomial functions.

Let $2^{[n]}$ denote the set of all subsets of $[n]$. For $I \subseteq [n]$, let $\vect{e}_I$ be the
\emph{characteristic vector} of $I$, i.e., the $n$-tuple in $L^n$ whose $i$-th component is $1$ if $i \in I$, and 0 otherwise.
Note that the mapping $\alpha \colon 2^{[n]} \to \{0, 1\}^n$ given by $\alpha(I) = \vect{e}_I$, for every $I \in 2^{[n]}$, is an order-isomorphism. 

\begin{proposition}[{Goodstein \cite{Goodstein}}]\label{prop:DNF(f)}
Let $L$ be a bounded distributive lattice. A function $f \colon L^n \to L$ is a polynomial function if and only if for every $\vect{x} \in L^n$,
\[
f(\vect{x}) = \bigvee_{I \subseteq [n]} \bigl( f(\vect{e}_I) \wedge \bigwedge_{i \in I} x_i \bigr).
\]
\end{proposition}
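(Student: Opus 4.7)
The proposition is essentially the uniqueness statement for the DNF representation once one insists on the canonical coefficients $a_I = f(\vect{e}_I)$. My plan is to prove both implications by reducing to the general DNF (Proposition~\ref{DNF}) and then pinning down the coefficients via the restriction-to-$\{0,1\}^n$ machinery provided by Corollaries~\ref{Ext1} and~\ref{Ext2}.

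For the \emph{if} direction there is nothing to do beyond pointing out that the right-hand side is already in the form of Proposition~\ref{DNF}, with $a_I := f(\vect{e}_I)\in L$; hence such an $f$ is a polynomial function.

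For the \emph{only if} direction, suppose $f\colon L^n\to L$ is a polynomial function and define
\[
g(\vect{x}) := \bigvee_{I\subseteq [n]} \bigl(f(\vect{e}_I)\wedge \bigwedge_{i\in I} x_i\bigr).
\]
Then $g$ is a polynomial function by Proposition~\ref{DNF}. By Corollary~\ref{Ext1} it suffices to verify that $f$ and $g$ agree on $\{0,1\}^n$, i.e. that $g(\vect{e}_K)=f(\vect{e}_K)$ for every $K\subseteq [n]$. Evaluating the defining formula for $g$ at $\vect{e}_K$, the inner conjunction $\bigwedge_{i\in I}(\vect{e}_K)_i$ equals $1$ precisely when $I\subseteq K$ and equals $0$ otherwise, so
\[
g(\vect{e}_K) = \bigvee_{I\subseteq K} f(\vect{e}_I).
\]
Since $f$ is built from $\wedge$ and $\vee$ applied to projections and constants, its restriction to $\{0,1\}^n$ is order-preserving (this is the direction of Corollary~\ref{Ext2} that is immediate by induction on the construction of $f$); in particular $f(\vect{e}_I)\le f(\vect{e}_K)$ whenever $I\subseteq K$. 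Hence the supremum above is attained at $I=K$, giving $g(\vect{e}_K)=f(\vect{e}_K)$, as required.

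The only step that requires any thought is the passage from the arbitrary $a_I$'s of Proposition~\ref{DNF} to the canonical choice $a_I=f(\vect{e}_I)$; everything else is formal. This step is easy here because we are free to reroute through Corollary~\ref{Ext1}, so we never need to manipulate the $a_I$'s directly — we only need monotonicity of $f\!\restriction_{\{0,1\}^n}$ to collapse $\bigvee_{I\subseteq K} f(\vect{e}_I)$ to $f(\vect{e}_K)$.
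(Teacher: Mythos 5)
Your proof is correct. Note that the paper itself gives no proof of this proposition: it is stated as a citation of Goodstein, with Corollaries~\ref{Ext1} and~\ref{Ext2} likewise imported as consequences of Goodstein's Theorems D and E. So there is no in-paper argument to compare against; what you have written is a self-contained derivation from Proposition~\ref{DNF} together with those two corollaries. The argument is sound: the ``if'' direction is immediate, the evaluation $g(\vect{e}_K)=\bigvee_{I\subseteq K} f(\vect{e}_I)$ is right (with the usual convention that the empty meet is $1$, so the $\emptyset$-term contributes the constant $f(\vect{e}_\emptyset)$, which is harmlessly dominated by $f(\vect{e}_K)$), and monotonicity of $f$ collapses the join to $f(\vect{e}_K)$. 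The one caveat worth flagging is logical dependency rather than correctness: in many treatments Corollary~\ref{Ext1} is itself \emph{deduced} from the canonical DNF formula you are proving, so routing through it would be circular in those developments. Here it is not, because the paper sources Corollaries~\ref{Ext1} and~\ref{Ext2} independently from Goodstein; but if you wanted an argument free of that dependency, you could instead start from an arbitrary DNF $\bigvee_I (a_I\wedge\bigwedge_{i\in I}x_i)$ of $f$ (Proposition~\ref{DNF}), compute $f(\vect{e}_K)=\bigvee_{I\subseteq K}a_I$, and verify directly by absorption that replacing each $a_I$ by $\bigvee_{J\subseteq I}a_J=f(\vect{e}_I)$ does not change the induced function. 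Your route is shorter; the alternative buys independence from Corollary~\ref{Ext1}.
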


It is noteworthy that Proposition~\ref{prop:DNF(f)} leads to the following characterization of the essential arguments of polynomial functions in terms of necessary and sufficient conditions \cite{CL-ISMVL}.

\begin{proposition}
\label{prop:essential}
Let $L$ be a bounded distributive lattice and let $f \colon L^n \to L$ be a polynomial function.  Then for each $j \in [n]$, $x_j$ is essential in $f$ if and only if there exists a set $J \subseteq [n] \setminus \{j\}$ such that $ f(\vect{e}_J)< f(\vect{e}_{J \cup \{j\}})$.
\end{proposition}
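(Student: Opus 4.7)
The plan is to prove the two implications separately, using Proposition~\ref{prop:DNF(f)} which expresses $f$ in DNF with coefficients $a_I = f(\vect{e}_I)$.

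For the easy direction (sufficiency), suppose there is $J \subseteq [n] \setminus \{j\}$ with $f(\vect{e}_J) < f(\vect{e}_{J \cup \{j\}})$. The tuples $\vect{e}_J$ and $\vect{e}_{J \cup \{j\}}$ coincide in every coordinate except the $j$-th (which equals $0$ in the first and $1$ in the second), so $f$ takes different values on two tuples differing only in coordinate $j$. Hence $x_j$ is essential.

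For the converse I would argue the contrapositive. Assume that $f(\vect{e}_J) = f(\vect{e}_{J \cup \{j\}})$ for every $J \subseteq [n] \setminus \{j\}$, and show that $x_j$ is inessential. Using Proposition~\ref{prop:DNF(f)} and splitting the DNF according to whether $j \in I$ or not, one obtains
\[
f(\vect{x}) = \bigvee_{J \subseteq [n] \setminus \{j\}} \!\!\bigl( f(\vect{e}_J) \wedge \bigwedge_{i \in J} x_i \bigr) \, \vee \, \bigvee_{J \subseteq [n] \setminus \{j\}} \!\!\bigl( f(\vect{e}_{J \cup \{j\}}) \wedge x_j \wedge \bigwedge_{i \in J} x_i \bigr).
\]
Under the standing assumption, each term in the second big join equals $f(\vect{e}_J) \wedge x_j \wedge \bigwedge_{i \in J} x_i$, which is bounded above by the corresponding term $f(\vect{e}_J) \wedge \bigwedge_{i \in J} x_i$ of the first join. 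Consequently the second join is absorbed by the first, yielding
\[
f(\vect{x}) = \bigvee_{J \subseteq [n] \setminus \{j\}} \bigl( f(\vect{e}_J) \wedge \bigwedge_{i \in J} x_i \bigr),
\]
an expression in which $x_j$ does not occur, so $x_j$ is inessential in $f$.

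The only subtle point is the absorption step, which hinges on the (well known) monotonicity of lattice polynomial functions, guaranteeing $f(\vect{e}_J) \leq f(\vect{e}_{J \cup \{j\}})$ and hence that equality of these two values forces the ``$x_j$-terms'' to be dominated by the ``$x_j$-free'' ones; apart from this, the argument is a direct manipulation of the canonical DNF provided by Proposition~\ref{prop:DNF(f)}.
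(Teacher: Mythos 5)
Your proof is correct. Note that the paper itself does not prove this proposition --- it is quoted from the reference \cite{CL-ISMVL} --- so there is no in-paper argument to compare against; your derivation from the canonical DNF of Proposition~\ref{prop:DNF(f)} is the natural self-contained route. One small point of precision: the contrapositive of the forward implication only gives you ``for all $J$, \emph{not} $f(\vect{e}_J) < f(\vect{e}_{J \cup \{j\}})$'', and in a general lattice this is weaker than equality; it is the monotonicity of polynomial functions (implicit in Corollary~\ref{Ext2}), giving $f(\vect{e}_J) \leq f(\vect{e}_{J \cup \{j\}})$, that upgrades ``not $<$'' to ``$=$''. That is where monotonicity is genuinely needed --- the absorption step itself needs nothing beyond $a \wedge b \leq a$ once the two coefficients are known to be equal. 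Your closing remark attributes the role of monotonicity to the absorption rather than to this reduction, which is a harmless misplacement; the mathematics is sound.
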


\begin{remark}
The assumption that the lattice $L$ is bounded is not very crucial. Let $L'$ be the lattice obtained from $L$ by adjoining new top and bottom elements $\top$ and $\bot$, if necessary. Then, if $f$ is a polynomial function over $L$ induced by a polynomial $p$, then $p$ induces a polynomial function $f'$ on $L'$, and it holds that the restriction of $f'$ to $L$ coincides with $f$. Similarly, if $L'$ is a distributive lattice and $f'$ is a polynomial function on $L'$ represented by the DNF
\[
\bigvee_{I \subseteq [n]} (a_I \wedge \bigwedge_{i \in I} x_i),
\]
then by omitting each term $a_I \wedge \bigwedge_{i \in I} x_i$ where $a_I = \bot$ and replacing each term $a_I \wedge \bigwedge_{i \in I} x_i$ where $a_I = \top$ by $\bigwedge_{i \in I} x_i$, we obtain an equivalent polynomial representation for $f'$. Unless $f'$ is the constant function that takes value $\top$ or $\bot$ and this element is not in $L$, the function $f$ on $L$ induced by this new polynomial coincides with the restriction of $f'$ to $L$.
\end{remark}

%%%%%

\subsection{Self-commuting polynomial functions on chains}

In this subsection we provide explicit descriptions of self-commuting polynomial functions on chains.

A lattice polynomial function $f \colon L^n \to L$ is said to be a \emph{weighted disjunction} if it is of the form
\begin{equation}
f(x_1, x_2, \dotsc, x_n) = a_\emptyset \vee \bigvee_{i \in [n]} a_i x_i
\label{eq:bisymmetric1}
\end{equation}
for some elements $a_\emptyset$, $a_i$ ($i \in [n]$) of $L$. We say that $f$ has \emph{chain form} if
\begin{equation}
f(x_1, x_2, \dotsc, x_n) = a_\emptyset \vee \bigvee_{i \in [n]} a_i x_i \vee \bigvee_{1 \leq \ell \leq r} a_{S_\ell} \bigwedge_{i \in S_\ell} x_i,
\label{eq:bisymmetric}
\end{equation}
for a chain of subsets $S_1 \subseteq S_2 \subseteq \dotsb \subseteq S_r \subseteq [n]$, $r \geq 1$, $\lvert S_1 \rvert \geq 2$, and some elements $a_\emptyset$, $a_i$ ($i \in [n]$), $a_{S_\ell}$ ($1 \leq \ell \leq r$) of $L$ such that $a_I \leq a_J$ whenever $I \subseteq J$, and for all $i \notin S_1$, there is a $j \in S_1$ such that $a_i \leq a_j$.

\begin{theorem}
\label{main}
Let $L$ be a bounded chain. A polynomial function $f \colon L^n \to L$ is self-commuting if and only if it is a weighted disjunction or it has chain form.
\end{theorem}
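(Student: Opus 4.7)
The proof has two directions. For sufficiency, direct expansion using distributivity establishes the bisymmetry identity for each prescribed form. If $f(\vect x) = a_\emptyset \vee \bigvee_{i} a_i x_i$ is a weighted disjunction, then both sides of the bisymmetry identity applied to an $n \times n$ matrix $(x_{ik})$ reduce to
\[
a_\emptyset \vee \bigvee_{i, k \in [n]} (a_i \wedge a_k \wedge x_{ik}),
\]
which is manifestly symmetric under transposition, so $f \perp f$. For $f$ in chain form, the expansion is more elaborate, but the chain condition $S_1 \subseteq \cdots \subseteq S_r$ collapses the meet of two chain-term factors into the factor indexed by the larger of the two sets, while the monotonicity $a_I \leq a_J$ for $I \subseteq J$ and the hypothesis that each $a_i$ with $i \notin S_1$ is dominated by some $a_j$ with $j \in S_1$ ensure that all cross-interactions between the singleton summands and the chain summands arrange symmetrically under row/column transposition.

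For necessity, assume $f$ is self-commuting with essential DNF $\bigvee_{I \in \cl E} a_I \bigwedge_{i \in I} x_i$, and let $\cl H = \{I \in \cl E : \lvert I \rvert \geq 2\}$. The crux is to show that $\cl H$ is totally ordered by inclusion. Suppose for contradiction that $I, J \in \cl H$ are incomparable. By Corollary~\ref{cor:smsc} every simple minor of $f$ is self-commuting; choose the identification $\sigma \colon [n] \to [\nu]$ that sends $I \setminus J$, $J \setminus I$, $I \cap J$, and (if nonempty) $[n] \setminus (I \cup J)$ to distinct classes, so that $\sigma(I)$ and $\sigma(J)$ remain incomparable subsets of $[\nu]$ each of size at least two. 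A further simple-minor identification, combined with substitution of a suitable value on the dummy class, reduces to a self-commuting ternary polynomial $h$ whose essential DNF contains two incomparable higher-order terms, prototyped by $h(x, y, z) = xy \vee yz$. For this prototype, the matrix
\[
\begin{pmatrix} 1 & 0 & 1 \\ 1 & 1 & 1 \\ 0 & 1 & 0 \end{pmatrix}
\]
gives row composition $0$ and column composition $1$, contradicting $h \perp h$. In the general case, substituting $a_\emptyset$ for the $0$ entries and a sufficiently large element for the $1$ entries transfers the failure.

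Once $\cl H$ is known to be a chain $S_1 \subsetneq \cdots \subsetneq S_r$, monotonicity $a_I \leq a_J$ for essential $I \subseteq J$ is immediate from essentiality, since $a_J > \bigvee_{K \subsetneq J} a_K \geq a_I$. The final chain-form condition, that every essential $a_i$ with $i \notin S_1$ satisfies $a_i \leq a_j$ for some $j \in S_1$, follows by the same technique: if $a_i$ exceeded every $a_j$ with $j \in S_1$, an analogous minor reduction would produce a self-commuting ternary polynomial of the shape $\alpha y_1 \vee \beta (y_2 \wedge y_3)$, whose bisymmetry is falsified by
\[
\begin{pmatrix} a_\emptyset & a_\emptyset & a_\emptyset \\ \alpha & a_\emptyset & a_\emptyset \\ a_\emptyset & \alpha \wedge \beta & \alpha \wedge \beta \end{pmatrix},
\]
mirroring the Boolean failure of $x \vee y \wedge z$. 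The principal technical obstacle throughout is the bookkeeping in these minor reductions: after identification, a coefficient in the DNF of $f_\sigma$ is the join of the original coefficients over its preimages, so one must verify that the images $\sigma(I)$ and $\sigma(J)$ retain essentiality in $f_\sigma$, i.e., that their coefficients strictly exceed the joins of coefficients on proper subset images, so that the reduced polynomial indeed inherits the obstruction used to derive the contradiction.
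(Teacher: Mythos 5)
Your sufficiency argument follows the same route as the paper's (expand the DNF of the composed function and absorb terms), and although your treatment of the chain-form case is only a sketch, the idea is correct. The gap is in the necessity direction. You reduce two incomparable essential terms of size at least $2$ to a ternary minor ``prototyped by $h(x,y,z)=xy\vee yz$'' and claim that a single $0$--$1$ matrix falsifying bisymmetry for the Boolean prototype transfers to general coefficients after substituting $a_\emptyset$ for $0$ and ``a sufficiently large element'' for $1$. This transfer does not work, and no single fixed matrix can do the job. For a ternary polynomial with coefficients $a_\emptyset, a_1, \dotsc, a_{123}$ satisfying $a_{12}>a_1\vee a_2$ and $a_{23}>a_2\vee a_3$, evaluating both sides of the commutation identity on the natural $\{0,1\}$-matrix yields $a_1\vee a_2\vee a_{12}a_{23}$ versus $a_1\vee a_2$, which need not differ: self-commutation only forces the inequality $a_{12}a_{23}\leq a_1\vee a_2$. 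To get a contradiction one must collect the companion inequalities $a_{ij}a_{jk}\leq a_i\vee a_j$ obtained from permuted matrices and then use the fact that $L$ is a chain, so that $a_{12}$ and $a_{23}$ are comparable: if $a_{12}\leq a_{23}$ then $a_{12}=a_{12}a_{23}\leq a_1\vee a_2$ contradicts essentiality of the $\{1,2\}$-term, and symmetrically otherwise. Which matrix witnesses the failure thus depends on the relative order of the coefficients. Your argument never invokes the total order on $L$; since the authors explicitly leave open whether necessity holds over general bounded distributive lattices, an argument that does not use the chain hypothesis cannot be complete. The same objection applies to your treatment of the condition that each $a_i$ with $i\notin S_1$ be dominated by some $a_j$ with $j\in S_1$.

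A second, smaller gap is the minor reduction itself, which you flag as ``the principal technical obstacle'' but do not resolve. The paper needs a genuine case analysis here: when $\lvert I\cap J\rvert=1$, $\lvert J\rvert=2$ and $n=4$, there are not enough spectator variables to identify without disturbing the essentiality of the two terms, and the paper abandons the reduction in that case in favour of a direct matrix computation. (One could instead substitute the fixed point $a_\emptyset$ for the spectator variable, which preserves self-commutation by the constant-substitution lemma of Section~\ref{sec:preliminaries} and visibly preserves the coefficients on subsets avoiding that variable, but this has to be said and checked.) Verifying that $\sigma(I)$ and $\sigma(J)$ remain essential and incomparable in $f_\sigma$ is exactly where the work lies, and it is absent from your proposal.
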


Theorem \ref{main} will be a consequence of the following two results.
We start with a lemma that provides sufficient conditions for a polynomial to be self-commuting in the general case of bounded distributive lattices. 

\begin{lemma}
\label{lemma:sufficiency}
Let $L$ be a distributive lattice. Assume that a function $f \colon L^n \to L$ is a weighted disjunction or has chain form.
Then $f$ is self-commuting.
\end{lemma}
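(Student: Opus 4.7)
The plan is to show that both sides of the self-commutation identity, when expanded via distributivity, reduce to the same expression that is invariant under the transposition $(a_{ij}) \leftrightarrow (a_{ji})$ of the input matrix. Since the right-hand side is obtained from the left by exactly this transposition, equality will then be automatic.

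For a weighted disjunction $f = a_\emptyset \vee \bigvee_i a_i x_i$, a direct expansion shows that the left-hand side equals $a_\emptyset \vee \bigvee_{i,j \in [n]} (a_i \wedge a_j \wedge a_{ij})$, since every cross-term containing $a_\emptyset$ is absorbed by the leading $a_\emptyset$; this expression is manifestly symmetric in $(i, j)$, so the right-hand side expands to the same thing.

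For the chain form, the target canonical expression is
\[
\alpha := a_\emptyset \vee \bigvee_{i,j \in [n]} (a_i \wedge a_j \wedge a_{ij}) \vee \bigvee_{\ell=1}^{r} \Bigl( a_{S_\ell} \wedge \bigwedge_{(i,k) \in S_\ell \times S_\ell} a_{ik} \Bigr),
\]
which is transpose-invariant by inspection. The inequality LHS $\geq \alpha$ is a direct term-by-term check. For LHS $\leq \alpha$, set $r_i := f(a_{i1}, \dots, a_{in})$, write the LHS as $a_\emptyset \vee \bigvee_i a_i r_i \vee \bigvee_\ell a_{S_\ell} \bigwedge_{i \in S_\ell} r_i$, and for each $\ell$ distribute the meet $\bigwedge_{i \in S_\ell} r_i$ over the DNFs of the $r_i$. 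This produces a join indexed by choice functions $\gamma$ on $S_\ell$ that select one atom in the DNF of $r_i$ for each $i$, the atoms being either $a_\emptyset$, singleton-type $a_j \wedge a_{ij}$, or chain-type $a_{S_{\ell'}} \wedge \bigwedge_{k \in S_{\ell'}} a_{ik}$. Each resulting product is to be absorbed into an atom of $\alpha$ using three key facts: (a) $a_I \leq a_J$ whenever $I \subseteq J$; (b) $a_{S_\ell} \wedge a_{S_{\ell'}} = a_{S_{\min(\ell,\ell')}}$ from the chain structure; and (c) the hypothesis that for every $i \notin S_1$ some $j \in S_1$ satisfies $a_i \leq a_j$, from which one deduces $a_k \leq a_{S_\ell}$ for every $k \in [n]$ and every $\ell \in [r]$.

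The main obstacle is the case analysis. The all-singleton sub-case requires choosing an index $i_0 \in S_\ell$ with $a_{i_0} \geq \bigwedge_{i \in S_\ell} a_{j_i}$, after which the product is dominated by the $\alpha$-atom $a_{i_0} \wedge a_{j_{i_0}} \wedge a_{i_0, j_{i_0}}$; such $i_0$ is produced by fixing any $i \in S_\ell$ and setting $i_0 := j_i$ when $j_i \in S_\ell$, otherwise $i_0 := k$ for the $k \in S_1 \subseteq S_\ell$ furnished by (c). The all-chain sub-case collapses via (b) to $a_{S_{\ell^*}} \wedge \bigwedge_{(i,k) \in S_{\ell^*} \times S_{\ell^*}} a_{ik}$, where $\ell^* := \min(\ell, \min_i \ell_i)$, and the inclusion $S_{\ell^*} \times S_{\ell^*} \subseteq \{(i,k) : i \in S_\ell, k \in S_{\ell_i}\}$ identifies this as an $\alpha$-atom. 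The mixed sub-case, where some $\gamma(i)$ are singleton-type and others chain-type, combines these two arguments: one drops superfluous variable factors coming from the chain-type atoms to reduce to a singleton-style absorption. Once LHS $= \alpha$ is established, transpose invariance of $\alpha$ gives RHS $= \alpha$, completing the proof.
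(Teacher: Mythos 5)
Your proposal is correct and takes essentially the same route as the paper: both expand the composite by distributivity and reduce each side to the same transpose-invariant normal form (the right-hand side of \eqref{rh1}), using the same three auxiliary facts (monotonicity of the coefficients, the chain structure of the $S_\ell$, and $a_i \leq a_{S_\ell}$ for all $i$) and the same absorption arguments, including the choice of $k \in S_1$ with $a_j \leq a_k$ in the singleton and mixed cases. The only piece left implicit in your sketch is the absorption of the terms $a_i \wedge r_i$ arising from the singleton part of the outer $f$ (the paper's term (II)), but these succumb to exactly the absorption trick you already describe.
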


\begin{proof}
Assume first that $f$ is a weighted disjunction. We have that
\begin{multline*}
f \bigl( f(x_{11}, x_{12}, \dots, x_{1n}), \dots, f(x_{n1}, x_{n2}, \dots, x_{nn}) \bigr) \\
 = a_\emptyset \vee \bigvee_{i \in [n]} a_i (a_\emptyset \vee \bigvee_{j \in [n]} a_j x_{ij} )
 = a_\emptyset \vee \bigvee_{i \in [n]} \bigvee_{j \in [n]} a_i a_j x_{ij} \\
 = a_\emptyset \vee \bigvee_{j \in [n]} \bigvee_{i \in [n]} a_j a_i x_{ij}
 = a_\emptyset \vee \bigvee_{j \in [n]} a_j (a_\emptyset \vee \bigvee_{i \in [n]} a_i x_{ij} ) \\
 = f \bigl( f(x_{11}, x_{21}, \dots, x_{n1}), \dots, f(x_{1n}, x_{2n}, \dots, x_{nn}) \bigr).
\end{multline*}
Thus, $f$ is self-commuting.

Assume then that $f$ has chain form. The assumption that for every $i \notin S_1$ there is a $j \in S_1$ such that $a_i \leq a_j$ implies that $a_i \leq a_{S_\ell}$ (and hence $a_i a_{S_\ell} = a_i$) for all $i \in [n]$ and for all $\ell \in [r]$. Using this observation and the distributive laws we get
\[
\begin{split}
& \!\!\!\!\!\!\!\!\!\!\!\!\!\!\!\!\!\!\!\!\!\! f \bigl( f(x_{11}, x_{12}, \dotsc, x_{1n}), f(x_{21}, x_{22}, \dotsc, x_{2n}), \dotsc, f(x_{n1}, x_{n2}, \dotsc, x_{nn}) \bigr) \\
= a_\emptyset & \vee \bigvee_{i \in [n]} a_i \Bigl[ a_\emptyset \vee \bigvee_{j \in [n]} a_j x_{ij} \vee \bigvee_{1 \leq \ell \leq r} a_{S_\ell} \bigwedge_{j \in S_\ell} x_{ij} \Bigr] \\ & \vee \bigvee_{1 \leq t \leq r} a_{S_t} \bigwedge_{i \in S_t} \Bigl[ a_\emptyset \vee \bigvee_{j \in [n]} a_j x_{ij} \vee \bigvee_{1 \leq \ell \leq r} a_{S_\ell} \bigwedge_{j \in S_\ell} x_{ij} \Bigr] \\
= a_\emptyset & \vee \underbrace{\bigvee_{i \in [n]} \bigvee_{j \in [n]} a_i a_j x_{ij}}_{\text{(I)}} \vee \underbrace{\bigvee_{i \in [n]} \bigvee_{1 \leq \ell \leq r} a_i \bigwedge_{j \in S_\ell} x_{ij}}_{\text{(II)}} \\ & \vee \underbrace{\bigvee_{1 \leq t \leq r} \bigwedge_{i \in S_t} \Bigl[ a_\emptyset \vee \bigvee_{j \in [n]} a_j x_{ij} \vee \bigvee_{1 \leq \ell \leq r} a_{S_t} a_{S_\ell} \bigwedge_{j \in S_\ell} x_{ij} \Bigr]}_{\text{(III)}}.
\end{split}
\]
Every term in (II) is absorbed by a term in (I): for every $i \in [n]$, there is a $k \in S_1$ such that $a_i \leq a_k$, and hence for any $\ell \in [r]$, the term $a_i \bigwedge_{j \in S_\ell} x_{ij} = a_i a_k x_{ik} \bigwedge_{j \in S_\ell \setminus \{k\}} x_{ij}$ in (II) is absorbed by the term $a_i a_k x_{ik}$ in (I).

In (III), for a fixed $t$, if $\ell > t$, then the term $a_{S_t} a_{S_\ell} \bigwedge_{j \in S_\ell} x_{ij} = a_{S_t} \bigwedge_{j \in S_\ell} x_{ij}$ is absorbed by $a_{S_t} \bigwedge_{j \in S_t} x_{ij} = a_{S_t} a_{S_t} \bigwedge_{j \in S_t} x_{ij}$, and hence (III) simplifies to
\begin{equation}
\label{eqIV}
\bigvee_{1 \leq t \leq r} \underbrace{\bigwedge_{i \in S_t} \Bigl[ a_\emptyset \vee \bigvee_{j \in [n]} a_j x_{ij} \vee \bigvee_{1 \leq \ell \leq t} a_{S_\ell} \bigwedge_{j \in S_\ell} x_{ij} \Bigr]}_{\text{(IV)}}.
\end{equation}

For a fixed $t$, (IV) expands to the disjunction of all possible conjunctions $\bigwedge_{i \in S_t} \phi_i$ of $\lvert S_t \rvert$ terms, where each $\phi_i$ is one of $a_\emptyset$, $a_j x_{ij}$ for some $j \in [n]$, or $a_{S_\ell} \bigwedge_{j \in S_\ell} x_{ij}$ for some $1 \leq \ell \leq t$. If $\phi_i = a_\emptyset$ for some $i \in S_t$, then the conjunction is absorbed by $a_\emptyset$. If $\phi_i = a_i x_{ii}$ for some $i \in S_t$, then the conjunction is absorbed by the term $a_i a_i x_{ii} = a_i x_{ii}$ in (I).

Consider then such a conjunction $\bigwedge_{i \in S_t} \phi_i$ where for all $i \in S_t$, $\phi_i$ is not equal to $a_\emptyset$ nor to $a_i x_{ii}$, but for some $i \in S_t$, $\phi_i = a_j x_{ij}$ for some $j \neq i$. By our assumption, there is a $k \in S_1$ such that $a_j \leq a_k$ and hence $a_j = a_j a_k$. We have that $\phi_k$ equals either $a_\ell x_{k \ell}$ for some $\ell \neq k$ or $a_{S_\ell} \bigwedge_{m \in S_\ell} x_{km}$ for some $1 \leq \ell \leq t$. In the former case, $\phi_i \phi_k = a_j a_k x_{ij} a_\ell x_{k \ell}$, and hence the conjunction $\bigwedge_{i \in S_t} \phi_i$ is absorbed by the term $a_k a_\ell x_{k \ell}$ in (I). In the latter case, $\phi_i \phi_k = a_j a_k x_{ij} a_{S_\ell} \bigwedge_{m \in S_\ell} x_{km}$, and hence the conjunction $\bigwedge_{i \in S_t} \phi_i$ is absorbed by the term $a_k a_k x_{kk} = a_k x_{kk}$ in (I).

The remaining conjunctions that arise from the expansion of (IV) are of the form
\[
\bigwedge_{i \in S_t} a_{S_{\ell_i}} \bigwedge_{j \in S_{\ell_i}} x_{ij}
\]
where $1 \leq \ell_i \leq t$ ($i \in S_t$). Let $\ell' = \min_{i \in S_t} \ell_i$. If $\ell' < t$, then this conjunction is absorbed by $a_{S_{\ell'}} \bigwedge_{i \in S_{\ell'}} \bigwedge_{j \in S_{\ell'}} x_{ij}$, which arises from the expansion of
\[
\bigwedge_{i \in S_{\ell'}} \Bigl[ a_\emptyset \vee \bigvee_{j \in [n]} a_j x_{ij} \vee \bigvee_{1 \leq \ell \leq \ell'} a_{S_\ell} \bigwedge_{j \in S_\ell} x_{ij} \Bigr]
\]
in \eqref{eqIV}. Thus, the only remaining conjunction that arises from the expansion of (IV) is $a_{S_t} \bigwedge_{i \in S_t} \bigwedge_{j \in S_t} x_{ij}$.

Thus, we have that
\begin{multline}
f \bigl( f(x_{11}, x_{12}, \dotsc, x_{1n}), f(x_{21}, x_{22}, \dotsc, x_{2n}), \dotsc, f(x_{n1}, x_{n2}, \dotsc, x_{nn}) \bigr) = \\
a_\emptyset \vee \bigvee_{i \in [n]} \bigvee_{j \in [n]} a_i a_j x_{ij} \vee \bigvee_{1 \leq \ell \leq r} a_{S_\ell} \bigwedge_{i \in S_\ell} \bigwedge_{j \in S_\ell} x_{ij}.
\label{rh1}
\end{multline}
In a similar way, we can deduce that
\begin{multline}
f \bigl( f(x_{11}, x_{21}, \dotsc, x_{n1}), f(x_{12}, x_{22}, \dotsc, x_{n2}), \dotsc, f(x_{1n}, x_{2n}, \dotsc, x_{nn}) \bigr) = \\
a_\emptyset \vee \bigvee_{j \in [n]} \bigvee_{i \in [n]} a_i a_j x_{ij} \vee \bigvee_{1 \leq \ell \leq r} a_{S_\ell} \bigwedge_{j \in S_\ell} \bigwedge_{i \in S_\ell} x_{ij}.
\label{rh2}
\end{multline}
The right hand sides of \eqref{rh1} and \eqref{rh2} are clearly equal, and we conclude that $f$ is self-commuting.
\end{proof}

The necessity of the conditions in Theorem~\ref{main} follows from our next lemma.

\begin{lemma}
\label{lemma:necessity}
Let $L$ be a bounded chain. If a polynomial function $f \colon L^n \to L$ is self-commuting, then it is a weighted disjunction or it has chain form.
\end{lemma}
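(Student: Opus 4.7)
The plan is to argue the contrapositive. By Proposition~\ref{prop:DNF(f)} I first pass to the canonical DNF coefficients $a_I := f(\vect{e}_I)$; this already forces $a_I \leq a_J$ whenever $I \subseteq J$, so the only remaining requirements for chain form are a chain structure on the essential index sets of size $\geq 2$ together with the singleton-dominance condition. Set $\mathcal{E} := \{I \subseteq [n] : \lvert I \rvert \geq 2 \text{ and } a_I > \bigvee_{K \subsetneq I} a_K\}$. If $\mathcal{E} = \emptyset$, every term of size $\geq 2$ in the canonical DNF is absorbed by smaller ones and $f$ is a weighted disjunction, so from here on I assume $\mathcal{E} \neq \emptyset$ and need to establish (a) $\mathcal{E}$ is totally ordered by inclusion, and (b) for $S_1 := \min \mathcal{E}$, every $i \notin S_1$ admits some $j \in S_1$ with $a_i \leq a_j$.

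For (a) I would argue by contradiction: suppose $I, J \in \mathcal{E}$ are $\subseteq$-incomparable, and, since $L$ is a chain, assume $a_I \leq a_J$. Pick any $i_0 \in I$ and build the $n \times n$ input matrix whose row $i_0$ is $\vect{e}_J$, whose rows indexed by $I \setminus \{i_0\}$ are all $\vect{e}_I$, and whose remaining rows are $\vect{0}$. The resulting row $f$-values are $a_J$, $a_I$ (repeated), and $a_\emptyset$, and the $I$-th term of the outer $f$ contributes $a_I \wedge \min(a_J, a_I, \dotsc, a_I) = a_I$, so the left-hand side of the self-commutation identity equals $a_I$. Inspecting the columns, each nonzero column is the characteristic vector of $I$, $I \setminus \{i_0\}$, or $\{i_0\}$, according as its index lies in $I \cap J$, $I \setminus J$, or $J \setminus I$; hence the column $f$-values take only the values $a_I$, $a_{I \setminus \{i_0\}}$, $a_{i_0}$, or $a_\emptyset$, and a short chain-level calculation bounds the right-hand side by $\max(a_{I \cap J}, a_{I \setminus \{i_0\}}, a_{i_0})$. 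Each of these three coefficients is strictly less than $a_I$ by essentiality of $I$, so the two sides disagree, contradicting $f \perp f$.

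For (b) I again argue by contradiction: assume $i^* \notin S_1$ satisfies $a_{i^*} > a_j$ for every $j \in S_1$, and fix $s_1 \in S_1$. Consider the matrix whose row $s_1$ is $\vect{e}_{\{i^*\}}$, whose rows indexed by $S_1 \setminus \{s_1\}$ are all $\vect{e}_{S_1}$, and whose remaining rows are $\vect{0}$. The $S_1$-th term of the outer $f$ on the row side contributes $a_{S_1} \wedge a_{i^*}$. On the column side the only nontrivial columns are $\vect{e}_{S_1 \setminus \{s_1\}}$ (at positions $l \in S_1$) and $\vect{e}_{\{s_1\}}$ (at position $l = i^*$), so the column $f$-values are $a_{S_1 \setminus \{s_1\}}$ and $a_{s_1}$. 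Invoking part (a) and the minimality of $S_1$, the set $S_1 \setminus \{s_1\}$ is either a singleton or lies outside $\mathcal{E}$, hence inessential; iterating this observation gives $a_{S_1 \setminus \{s_1\}} \leq \max_{j \in S_1 \setminus \{s_1\}} a_j$. Combining essentiality of $S_1$ with the hypothesis $a_{i^*} > a_j$ for all $j \in S_1$ yields $a_{S_1} \wedge a_{i^*} > \max(a_{s_1}, a_{S_1 \setminus \{s_1\}})$, so the left-hand side of the self-commutation identity strictly exceeds the right-hand side, again contradicting $f \perp f$.

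The step I expect to be the main obstacle is part (a): a single matrix template has to work uniformly whether $I$ and $J$ overlap or are disjoint, and one must verify that every coefficient appearing in the column-side bound genuinely labels a proper subset of $I$, so that essentiality of $I$ uniformly pushes it below $a_I$; the bookkeeping of which column carries which coefficient, and the verification that the three summands $a_{I \cap J}$, $a_{I \setminus \{i_0\}}$, $a_{i_0}$ really are the only contributions that need to be beaten, is where the argument is most delicate.
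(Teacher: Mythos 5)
Your proof is correct, but it takes a genuinely different route from the paper's. The paper argues by induction on $n$: it settles $n \leq 3$ directly (using a $3 \times 3$ zero--one matrix to extract the inequalities $a_{ij} a_{jk} \leq a_i \vee a_j$), and for larger $n$ it reduces a pair of incomparable essential terms to lower arity via variable identification minors and Corollary~\ref{cor:smsc}, which forces a five-case analysis on $\lvert I \cap J \rvert$ and $\lvert J \rvert$ (the case $\lvert I \cap J \rvert = 1$, $\lvert J \rvert = 2$, $n = 4$ still needs its own explicit matrix), followed by a further two-case minor argument for the singleton-dominance condition. You instead work directly at arity $n$ with two explicit witness matrices. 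For incomparable essential $I, J$ with (WLOG, since $L$ is a chain) $a_I \leq a_J$, your row/column computation pins the left side of the commutation identity at $a_I$ and bounds the right side by $a_{I \cap J} \vee a_{I \setminus \{i_0\}} \vee a_{i_0}$; since $I \not\subseteq J$ and $\lvert I \rvert \geq 2$, all three index sets are proper subsets of $I$, so essentiality of $I$ together with the chain hypothesis yields a strict gap. The dominance matrix works analogously, with the left side at least $a_{S_1} \wedge a_{i^*}$ and the right side at most $a_{S_1 \setminus \{s_1\}} \vee a_{s_1}$. I checked the delicate points you flagged --- the column bookkeeping, the fact that $I \cap J \subsetneq I$ even when $I \cap J = \emptyset$, and the iterated-inessentiality identity $a_{S_1 \setminus \{s_1\}} = \bigvee_{j \in S_1 \setminus \{s_1\}} a_j$ (which legitimately relies on part (a) and the minimality of $S_1$) --- and they all go through. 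Your approach buys a shorter, non-inductive proof that dispenses with the minor machinery entirely; the paper's approach does less computation per step but pays for it in case analysis. Both ultimately rest on evaluating the self-commutation identity at zero--one matrices whose rows are characteristic vectors.
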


\begin{proof}
The statement clearly holds for $n = 1$ and $n = 2$, since every unary or binary polynomial function is a weighted disjunction or has chain form.

Suppose $n = 3$. Then
\begin{equation}
f = a_\emptyset \vee a_1 x_1 \vee a_2 x_2 \vee a_3 x_3 \vee a_{12} x_1 x_2 \vee a_{13} x_1 x_3 \vee x_{23} x_2 x_3 \vee a_{123} x_1 x_2 x_3,
\label{eq:f3}
\end{equation}
where $a_I \leq a_J$ whenever $I \subseteq J$.
If for all $i, j \in \{1,2,3\}$, $a_i \vee a_j = a_{ij}$, then each term $a_{ij} x_i x_j$ in \eqref{eq:f3} equals $(a_i \vee a_j) x_i x_j = a_i x_i x_j \vee a_j x_i x_j$ and gets absorbed by $a_i x_i$ and $a_j x_j$, and hence $f$ has the desired form \eqref{eq:bisymmetric1} or \eqref{eq:bisymmetric}. Otherwise, there exist $i, j$ such that $a_i \vee a_j < a_{ij}$; without loss of generality, assume that $a_1 \vee a_2 < a_{12}$.

We have that
\begin{align}
f \bigl( f( 1, 1, 0), f( 0, 1, 1), f( 0, 0, 0) \bigr) &=
a_1 \vee a_2 \vee a_{12} a_{23}, \label{eq:01matrix} \\
f \bigl( f( 1, 0, 0), f( 1, 1, 0), f( 0, 1, 0) \bigr) &=
a_1 \vee a_2, \nonumber
\end{align}
and since $f$ is self-commuting, we have $a_1 \vee a_2 \vee a_{12} a_{23} = a_1 \vee a_2$. This equality translates into $ a_{12} a_{23} \leq a_1 \vee a_2$. In a similar way, after suitably permuting the rows and columns of the $3 \times 3$ matrix used in \eqref{eq:01matrix}, we can deduce that
\begin{equation}
\label{eq:aij}
a_{ij} a_{jk} \leq a_i \vee a_j \leq a_{ij}
\end{equation}
for $\{i, j, k\} = \{1, 2, 3\}$.

Since $L$ is a chain, we have for some choice of $\{\alpha, \beta, \gamma\} = \{1, 2, 3\}$ that $a_{\alpha \beta} \leq a_{\beta \gamma} \leq a_{\alpha \gamma}$. Inequalities \eqref{eq:aij} then imply
\[
a_\alpha \vee a_\beta = a_{\alpha \beta}
\quad
\text{and}
\quad
a_\beta \vee a_\gamma = a_{\alpha \gamma},
\]
i.e., the terms associated with sets $\{\alpha, \beta\}$ and $\{\alpha, \gamma\}$ are inessential. Thus, $f$ has at most one essential term of size $2$. If $f$ has no essential term of size $2$, then either it is a weighted disjunction or it has chain form with $S_1 = \{1, 2, 3\}$. Otherwise $f$ has precisely one essential term of size $2$, say, associated with $S_1 = \{1, 2\}$. Then $a_{12} > a_1 \vee a_2$ and
\[
a_3 \leq a_{13} = a_{13} a_{12} \leq a_1 \vee a_2.
\]
Since $L$ is a chain, $a_3 \leq a_1$ or $a_3 \leq a_2$, and we conclude that $f$ has chain form.

We proceed by induction on $n$. Assume that the claim holds for $n < \ell$ for some $\ell \leq 4$. We show that it holds for $n = \ell$.

Let $f = a_\emptyset \vee \bigvee_{I \subseteq [\ell]} a_I \bigwedge_{i \in I} x_i$ be self-commuting, and assume that $a_I \leq a_J$ whenever $I \subseteq J$. If $f$ has no essential terms of size at least $2$, then $f$ is a weighted disjunction. Thus, we suppose that $f$ has an essential term of size at least $2$. First we show that the essential terms of $f$ of size at least $2$ are associated with a chain $S_1 \subseteq S_2 \subseteq \dots \subseteq S_q$. For a contradiction, suppose that there are $I, J \subseteq [k]$ such that $\lvert I \rvert \geq 2$, $\lvert J \rvert \geq 2$, $I \parallel J$ and the $I$-th and the $J$-th terms of $f$ are essential. Fix such $I$ and $J$ so that $\lvert I \cap J \rvert$ is the largest possible, $\lvert I \rvert \leq \lvert J \rvert$ and $\lvert J \rvert$ is the largest among such pairs. We will consider several cases.

Case 1: $\lvert I \cap J \rvert \geq 2$. Take distinct $i, j \in I \cap J$, and consider $\minor{f}{i}{j}$. This function is a polynomial function having essential terms $b_{I'} \bigwedge_{i \in I'} x_i$ and $b_{J'} \bigwedge_{i \in J'} x_i$ where $I' = I \setminus \{i\}$, $J' = J \setminus \{i\}$. Since $I' \parallel J'$, the induction hypothesis implies that $\minor{f}{i}{j}$ is not self-commuting, which contradicts Corollary~\ref{cor:smsc} which asserts that self-commutation is preserved by taking simple minors.

Case 2: $\lvert I \cap J \rvert \leq 1$ and $\lvert J \rvert \geq 3$. Take distinct $i, j \in J \setminus I$, and considert $\minor{f}{i}{j}$. As in Case~1, we derive a contradiction, because this function has essential terms $b_{I'} \bigwedge_{i \in I'} x_i$ and $b_{J'} \bigwedge_{i \in J'} x_i$ where $I' = I$, $J' = J \setminus \{i\}$ and $I' \parallel J'$.

Case 3: $\lvert I \cap J \rvert = 0$ and $\lvert J \rvert = 2$. Take $i \in I$, $j \in J$, and consider $\minor{f}{i}{j}$. Again, we derive a contradiction, because this function has essential terms $b_{I'} \bigwedge_{i \in I'} x_i$ and $b_{J'} \bigwedge_{i \in J'} x_i$ where $I' = (I \setminus \{i\}) \cup \{j\}$, $J' = J$ and $I' \parallel J'$.

Case 4: $\lvert I \cap J \rvert = 1$, $\lvert J \rvert = 2$ and $\ell \geq 5$. Take distinct $i, j \in [\ell] \setminus (I \cup J)$, and consider $\minor{f}{i}{j}$. Again, we derive a contradiction, because this function has essential terms $b_{I'} \bigwedge_{i \in I'} x_i$ and $b_{J'} \bigwedge_{i \in J'} x_i$ where $I' = I$, $J' = J$ and $I' \parallel J'$.

Case 5: $\lvert I \cap J \rvert = 1$, $\lvert J \rvert = 2$ and $\ell = 4$. We have that
\begin{align}
f \bigl( f( 1, 1, 0, 0), f( 0, 1, 1, 0), f( 0, 0, 0, 0), f( 0, 0, 0, 0) \bigr) &=
a_1 \vee a_2 \vee a_{12} a_{23}, \label{eq:case5} \\
f \bigl( f( 1, 0, 0, 0), f( 1, 1, 0, 0), f( 0, 1, 0, 0), f( 0, 0, 0, 0) \bigr) &=
a_1 \vee a_2, \nonumber
\end{align}
and since $f$ is self-commuting, we have $a_1 \vee a_2 \vee a_{12} a_{23} = a_1 \vee a_2$. This equality translates into $a_{12} a_{23} \leq a_1 \vee a_2$. In a similar way, after suitably permuting the rows and columns of the $4 \times 4$ matrix used in \eqref{eq:case5}, we can deduce that
\begin{equation}
\label{eq:case5b}
a_{ij} a_{jk} \leq a_i \vee a_j \leq a_{ij}
\end{equation}
for distinct $i, j, k \in \{1, 2, 3, 4\}$. Assume, without loss of generality, that $I = \{1, 2\}$, $J = \{2, 3\}$. Since $L$ is a chain, we either have $a_{12} \leq a_{23}$ or $a_{23} < a_{12}$. In the former case, by \eqref{eq:case5b}, we have
\[
a_{12} = a_{12} a_{23} \leq a_1 \vee a_2 \leq a_{12},
\]
which implies that $a_{12} = a_1 \vee a_2$, which contradicts the assumption that the $I$-th term of $f$ is essential. In the latter case, we have
\[
a_{23} = a_{23} a_{12} \leq a_2 \vee a_3 \leq a_{23},
\]
which implies that $a_{23} = a_2 \vee a_3$, which contradicts the assumption that the $J$-th term of $f$ is essential.

Thus, the essential terms of $f$ of size at least $2$ are associated with a chain $S_1 \subseteq S_2 \subseteq \dots \subseteq S_q$. To complete the proof, we need to show that for every $i \notin S_1$, there is a $j \in S_1$ such that $a_i \leq a_j$. For a contradiction, suppose that there is an $i \notin S_1$ such that $a_i > a_j$ for every $j \in S_1$. We consider several cases.

Case 1: $\lvert S_1 \rvert \geq 3$. Take distinct $k, m \in S_1$, and consider $\minor{f}{k}{m}$. The essential terms of $\minor{f}{k}{m}$ of size at least $2$ are associated with a chain $S'_1 \subseteq S'_2 \subseteq \dots \subseteq S'_q$, where $S'_i := S_i \setminus \{k\}$ for $1 \leq i \leq q$, and the $m$-th term of $f$ is $(a_k \vee a_m) x_m$. Since $a_i > a_j$ for every $j \in S_1$, the induction hypothesis implies that $\minor{f}{k}{m}$ is not self-commuting. This contradicts Corollery~\ref{cor:smsc} which asserts that self-commutation is preserved by taking simple minors.

Case 2: $\lvert S_1 \rvert = 2$. Then there is a $t \in [\ell] \setminus (S_1 \cup \{i\})$. Consider $\minor{f}{t}{i}$. The essential terms of $\minor{f}{t}{i}$ of size at least $2$ are associated with a chain whose least element is $S_1$, and the $i$-th term of this function is $(a_i \vee a_t) x_i$. Since for every $j \in S_1$, $a_i > a_j$, we also have $a_i \vee a_t > a_j$, and, as above, we have reached the desired contradiction.
\end{proof}

\begin{proof}[Proof of Theorem~\ref{main}]
Lemma~\ref{lemma:sufficiency}, when restricted to chains, shows that the condition is sufficient. Necessity follows from Lemma~\ref{lemma:necessity}.
\end{proof}

%%%%%%%%%%%%%%%%%%%%%%%%%%%%%%

\section{Concluding remarks and future work}
\label{sec:Concluding}

We have obtained an explicit form of self-commuting polynomial functions on chains (in fact, unique up to addition of inessential terms). As Lemma~\ref{lemma:sufficiency} asserts, our condition is sufficient in the general case of polynomial functions over distributive lattices. However, we do not know whether it is also a necessary condition in the general case. This constitutes a topic of ongoing research.

Another problem which was not addressed concerns commutation. As mentioned, self-commutation appears within the scope of aggregation function theory under the name of bisymmetry. In this context, functions are often regarded as mappings $f \colon \bigcup_{n \geq 1} A^n \to A$. In this framework, bisymmetry is naturally generalized to what is referred to as strong bisymmetry. Denoting by $f_n$ the restriction of $f$ to $A^n$, the map $f$ is said to be \emph{strongly bisymmetric} if for any $n, m \geq 1$, we have $f_n \perp f_m$. This generalization is both natural and useful from the application point of view. To illustrate this, suppose one is given data in tabular form, say an $n \times m$ matrix, to be meaningfully fused into a single representative value. One could first aggregate the data by rows and then aggregate the resulting column; or one could first aggregate the columns and then the resulting row. What is expressed by the property of strong bisymmetry is that the final outcome is the same under both procedures. Extending the notion of polynomial functions to such families, we are thus left with the problem of describing those families of polynomial functions which are strongly bisymmetric.

%%%%%%%%%%%%%%%%%%%%%%%%%%%%%%

\section*{Acknowledgments}

We would like to thank Jean-Luc Marichal for introducing us to the topic and for helpful discussions.

%%%%%%%%%%%%%%%%%%%%%%%%%%%%%%

\end{document}